\DeclareMathOperator{\Char}{char}
\DeclareMathOperator{\dist}{dist}
\DeclareMathOperator{\Int}{int}
\theoremstyle{plain}
\newtheorem{thm}{Theorem}[section]
\newtheorem{prop}[thm]{Proposition}
\newtheorem{lemma}[thm]{Lemma}
\theoremstyle{definition}
\theoremstyle{remark}
\numberwithin{equation}{section}
\newcommand{\R}{\mathbb{R}}
\newcommand{\barU}{\overline{U}}
\newcommand{\curlC}{\mathfrak{C}}
\newcommand{\curlS}{\mathfrak{S}}
\newcommand{\curlX}{\mathfrak{X}}
\newcommand{\barcurlX}{\overline{\curlX}}
\newcommand{\tildeF}{\widetilde{F}}
\newcommand{\tildeH}{\widetilde{H}}
\newcommand{\del}{\partial}
\newcommand{\Om}{\Omega}
\newcommand{\barOm}{\overline{\Om}}
\title{Solution branches of nonlinear eigenvalue problems on restricted domains}
\author{Shane Arora%
    \footnote{Email: saro0188@uni.sydney.edu.au}}
\affil{School of Mathematics and Statistics, University of Sydney, NSW 2006,
    Australia}
\begin{document}

\maketitle

\begin{abstract}
We extend bifurcation results of nonlinear eigenvalue problems
from real Banach spaces to any neighbourhood of a given
point. For points of odd multiplicity on these restricted domains, we establish
that the component of solutions through the bifurcation point either is unbounded,
admits an accumulation point on the boundary, or contains an even number of odd
multiplicity points. In the simple multiplicity case, we
show that branches of solutions in the directions of corresponding
eigenvectors satisfy similar conditions on our domains.
\end{abstract}

\paragraph{MSC Classification (2010):} 47J10; 47J15
\paragraph{Keywords:} Nonlinear eigenvalue problem, bifurcation theory, compact
operators.

\section{Introduction}

For Banach spaces $X$ and $Y$, any subset $B$ of $X$ and any function
$G : B \to Y$, we say that $G$ is \textit{compact} (or \textit{completely continuous})
if it is continuous and maps bounded closed subsets of $X$ contained in $B$ to
relatively compact sets. Let $X$ be an arbitrary Banach space, $\curlX = X \times \R$,
$\lambda_0 \in \R$ and $\Om \subseteq \curlX$ a neighbourhood of $(0, \lambda_0)$. 
We consider the \textit{nonlinear eigenvalue problem} on $\Om$ of the form
\begin{equation}  \label{eqn:nonlinear-eigenvalue-problem}
    0 = x - \lambda Kx - H(x, \lambda) =: F(x, \lambda),
\end{equation}
where $K : \curlX \to X$ is a compact linear operator and $H : \Om \to X$ is
compact. We suppose that $H$ is such that the function $h : \Om \to X$ given by
\[
    h(x, \lambda) =
    \begin{cases}
        \|x\|^{-1} H(x, \lambda), & x \neq 0, \\
        0, & x = 0,
    \end{cases}
\]
is continuous. Note that, when $\Om = \curlX$, this condition on $H$ is
equivalent to the conditions on $H$ given in
\cite[p.~487]{rabinowitzGlobalResultsNonlinear1971}
and \cite[p.~1069]{dancerStructureSolutionsNonlinear1973}.

We say that $(0, \lambda_0) \in \Om$
is a \textit{bifurcation point} of $F(x, \lambda) = 0$ (with respect to the
``curve'' of trivial solutions $x = 0$) if every neighbourhood of
$(0, \lambda_0)$ contains a non-trivial solution of $F(x, \lambda) = 0$. 
It is well-known that if $(0, \lambda_0)$ is a bifurcation point, then
$\lambda_0^{-1}$ is an eigenvalue of $K$
\cite[Proposition~28.1]{deimlingNonlinearFunctionalAnalysis1985}. This motivates
the definition of a \textit{characteristic value} of $K$: any $\lambda \in \R$
such that $\lambda^{-1}$ is an eigenvalue of $K$. We denote the set of
characteristic values of $K$ by $\Char(K)$.

We take the \textit{multiplicity} of a
characteristic value $\lambda_0$ to be the algebraic multiplicity of
$\lambda_0^{-1}$ as an eigenvalue of $K$. It was proved in the pioneering paper
by Rabinowitz
\cite[Theorem~1.3]{rabinowitzGlobalResultsNonlinear1971} that if $\lambda_0$
is of odd multiplicity, then $(0, \lambda_0)$ is a bifurcation point. Moreover,
assuming that $\Om = \curlX$ he showed that, for such $\lambda_0$, the connected
component $C_{\lambda_0}$ containing $(0, \lambda_0)$ of the closure of
non-trivial solutions to $F(x, \lambda) = 0$ either is unbounded or contains
some $(0, \mu) \neq (0, \lambda_0)$, where $\mu \in \Char(K)$ is of odd
multiplicity. A strengthened version of this result
by Dancer \cite[Corollary~1]{dancerStructureSolutionsNonlinear1973} states that
if $C_{\lambda_0}$ is bounded, then it contains an even number of $(0, \mu)$
with $\mu \in \Char(K)$ of odd multiplicity.

Of particular importance is the special case when $\lambda_0$ is of
multiplicity $1$ (i.e. it is \textit{simple}) with corresponding eigenvector $v$.
Then we can express $C_{\lambda_0}$ as the union of
$C_{\lambda_0}^+$ and $C_{\lambda_0}^-$:
closures of the unions of all \textit{branches of solutions} going
from $(0, \lambda_0)$ in the directions of $v$ and $-v$ respectively.
Dancer \cite[Theorem~2]{dancerStructureSolutionsNonlinear1973} proved
that either $C_{\lambda_0}^+$ and $C_{\lambda_0}^-$ are both unbounded or they
intersect away from $(0, \lambda_0)$.

The aim of our paper is to generalise the above results by Rabinowitz and Dancer
from $\Om = \curlX$ to any neighbourhood of $(0, \lambda_0)$. There have already
been some considerations of different domains for odd multiplicity. In his
original paper, Rabinowitz mentioned closures of bounded open sets as a
``weaker'' result \cite[Corollary~1.12]{rabinowitzGlobalResultsNonlinear1971}.
Turner further investigated these domains, proving that if $\del \Om$ is
sufficiently nice and $\lambda_0$ is the only characteristic value $\mu$ with
$(0, \mu) \in \barOm$, then $\del \Om$ admits either two solutions or one solution
of multiplicity two \cite[Theorem~2.4]{turnerTransversalityNonlinearEigenvalue1971}.
His result assumes that $F$ is globally defined,
though. A generalisation of Rabinowitz's result to any $\Om = \overline{\Int \Om}$ has
also been found (presented in \cite[Theorem~29.1]{deimlingNonlinearFunctionalAnalysis1985},
for example); however, this is insufficient, say, when $\Om$ is open. Consequently,
it fails to handle the cases when $H$ has singularities or is unbounded on a
bounded domain. The author is not aware of an existing analogue of the
simple multiplicity result for arbitrary neighbourhoods.

\section{Characteristic values of odd multiplicity}

We start with our generalisation of the odd multiplicity result.
Denote the closure in $\Om$ of non-trivial solutions of $F(x, \lambda) = 0$ by
$\curlS(F)$ and, for any $\lambda_0 \in \Char(K)$ of odd multiplicity, the
connected component of $(0, \lambda_0)$ in $\curlS$ by $C_{\lambda_0}(F)$. We
omit $F$ when it is clear from context. Our aim is to
prove the following theorem for any neighbourhood $\Om$ of $(0, \lambda_0)$.

\begin{thm} \label{thm:global-branches-odd}
    Let $\lambda_0$, $\Om$, $K$ and $F$ be as given in the introduction
    and $\curlS$ be as given above.
    If $\lambda_0 \in \Char(K)$ has odd multiplicity, then the
    connected component $C_{\lambda_0}$ of $(0, \lambda_0)$ in $\curlS$ 
    either is unbounded, admits a limit point on $\del \Om$
    or contains an even number of trivial solutions $(0, \mu)$ of
    $F(x, \lambda) = 0$ with $\mu \in \Char(K)$ of odd multiplicity.
\end{thm}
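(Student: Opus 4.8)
I would argue by contradiction, assuming that $C_{\lambda_0}$ is bounded, has no limit point on $\del\Om$, and contains an odd number of trivial solutions $(0,\mu)$ with $\mu\in\Char(K)$ of odd multiplicity — at least the point $(0,\lambda_0)$ itself. The first step is to note that these assumptions make $C_{\lambda_0}$ a \emph{compact} subset of $\Om$: as a connected component of $\curlS$ it is closed in $\Om$, and, having no limit point on $\del\Om$, it is closed in $\curlX$; being also bounded, it is itself a bounded closed subset of $\curlX$ contained in $\Om$, hence an admissible test set for the compactness of $K$ and of $H$, so that $\{x:(x,\lambda)\in C_{\lambda_0}\}=\{\lambda Kx+H(x,\lambda):(x,\lambda)\in C_{\lambda_0}\}$ is relatively compact in $X$ and $C_{\lambda_0}$ is compact. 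Consequently $\dist(C_{\lambda_0},\del\Om)>0$ (or $\del\Om=\varnothing$, which is the classical case), and I may fix a bounded open $\Om_0$ with $C_{\lambda_0}\subset\Om_0\subset\overline{\Om_0}\subseteq\Om$. Finally, since the eigenvalues of a compact operator accumulate only at $0$, the set $\Char(K)$ has no finite accumulation point, so only finitely many characteristic values have $\lambda$-coordinate in the (bounded) $\lambda$-range of $\overline{\Om_0}$.

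Next I would run the Leray--Schauder degree argument of Rabinowitz and Dancer, which is now legitimate precisely because $\overline{\Om_0}\subseteq\Om$. Adjoin to the compact set $\curlS\cap\overline{\Om_0}$ the finitely many trivial solutions $(0,\mu)$, $\mu\in\Char(K)$, in the relevant $\lambda$-range; this yields a compact metric space $\Sigma$ in which $C_{\lambda_0}$ is still a connected component, since no sequence in $C_{\lambda_0}$ can converge to a point $(0,\mu)\notin C_{\lambda_0}$ (as $C_{\lambda_0}$ is closed in $\Om$), so the adjoined points cannot merge with it. By a standard separation lemma for components of compact metric spaces (as in \cite{rabinowitzGlobalResultsNonlinear1971}), $C_{\lambda_0}$ can be enclosed in a bounded open set $V$ with $\overline{V}\subseteq\Om$ whose boundary meets neither $\curlS$ nor the trivial line $\{0\}\times\R$, except inside small balls around the finitely many $(0,\mu)$ captured by $V$, which are exactly those with $(0,\mu)\in C_{\lambda_0}$. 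For each $\lambda$ the map $F(\cdot,\lambda)=I-\lambda K-H(\cdot,\lambda)$ is a compact perturbation of the identity on $\overline{V^\lambda}$, where $V^\lambda=\{x:(x,\lambda)\in V\}$, so the degrees $d(\lambda):=\deg(F(\cdot,\lambda),V^\lambda,0)$ are defined, vanish for $\lambda$ outside the $\lambda$-range of $V$, and, by the classical homotopy and excision arguments, change only as $\lambda$ passes an odd-multiplicity characteristic value $\mu$ with $(0,\mu)\in C_{\lambda_0}$ — where the Leray--Schauder index at the trivial solution flips sign, so $d$ jumps by $\pm2$. Since $d$ starts and ends at $0$, the positive and negative jumps balance, forcing the number of such $\mu$ to be even, contradicting our assumption.

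As a side remark, the weaker Rabinowitz-type conclusion (unbounded, meets $\del\Om$, or contains a second odd characteristic value) already follows by applying the known bifurcation theorem for domains with $\Om=\overline{\Int\Om}$ \cite[Theorem~29.1]{deimlingNonlinearFunctionalAnalysis1985} to $F$ restricted to $\overline{\Om_0}$: any connected subset of $\curlS\cap\overline{\Om_0}$ through $(0,\lambda_0)$ is a connected subset of $\curlS$ through $(0,\lambda_0)$, hence lies in $C_{\lambda_0}$, so the relevant component there is exactly $C_{\lambda_0}$, which is bounded and misses $\del\Om_0$. Obtaining the full parity refinement, however, seems to require the degree computation above.

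The step I expect to be the main obstacle is the construction of the isolating set $V$ together with the verification that restricting to $\overline{\Om_0}$ and compactifying the solution set leave $C_{\lambda_0}$ genuinely unchanged and introduce no spurious connections through the adjoined trivial points. This is also exactly the point at which the hypothesis ``no limit point on $\del\Om$'' is indispensable: it is what permits $\overline{V}$ to be kept inside $\Om$, and hence what makes $\lambda K+H(\cdot,\lambda)$ an honest compact perturbation of the identity on $\overline{V}$ rather than a map that may be singular or unbounded as $\del\Om$ is approached. Once $V$ is in hand, the degree and index bookkeeping is the classical one and carries over essentially verbatim.
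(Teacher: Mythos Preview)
Your argument is essentially correct but proceeds differently from the paper. Rather than rerunning the Leray--Schauder degree computation on a localised isolating neighbourhood $V\subseteq\Om$, the paper reduces directly to the already-known global case $\Om=\curlX$: after finding $\delta>0$ with $C_{\lambda_0}(F)\subseteq\Om_\delta$, it extends $h$ from $\overline{\Om_\delta}$ to all of $\curlX$ via Dugundji's extension theorem, obtaining a globally defined $\tildeF_\delta$ of the same form, and then uses a separation argument (the paper's Lemma~\ref{lem:subset-branches}, itself based on Whyburn's lemma) to show $C_{\lambda_0}(F)=C_{\lambda_0}(F_\delta)=C_{\lambda_0}(\tildeF_\delta)$, so that Dancer's global parity result applies verbatim. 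Both routes rest on the same two pillars---compactness of $C_{\lambda_0}$ once it is bounded and bounded away from $\del\Om$, and Whyburn-type separation of components---but they deploy them differently: you use separation to carve out $V$ inside $\Om$ and then redo the degree bookkeeping there, whereas the paper uses separation to certify that the component is unchanged under extension and then invokes the existing theorem as a black box. The paper's approach is more modular (the same extension-plus-Lemma~\ref{lem:subset-branches} mechanism is reused for the simple-eigenvalue theorem) and avoids repeating the index calculation; yours is more self-contained and sidesteps the extension theorem entirely. Your side remark about recovering the weaker Rabinowitz alternative from \cite[Theorem~29.1]{deimlingNonlinearFunctionalAnalysis1985} on $\overline{\Om_0}$ is in fact very close in spirit to the paper's strategy, and the equality of components you assert there is precisely the content of Lemma~\ref{lem:subset-branches}.
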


We remark that all three alternatives for $C_{\lambda_0}$ are possible. A simple
example of the first is $H(x, \lambda) \equiv 0$. The second case is guaranteed
when $\Om$ is a bounded neighbourhood of $(0, \lambda_0)$ such that
$\mu = \lambda_0$ is the only element of $\Char(K)$ with
$(0, \mu) \in \Om$. An instance of the final case can be found in
\cite[pp.~492--493]{rabinowitzGlobalResultsNonlinear1971}.

In the special case that $\Om = \curlX$, the above theorem is simply
\cite[Corollary~1]{dancerStructureSolutionsNonlinear1973}. Our approach
is to reduce the theorem from general $\Om$ to $\Om = \curlX$.
The main step is the following lemma, which will also be useful when we
consider bifurcations at $(0, \lambda_0)$ for simple $\lambda_0$.

\begin{lemma} \label{lem:subset-branches}
    Let $\Om_1 \subseteq \Om_2$ be neighbourhoods of
    $(0, \lambda_0)$ contained in the domain of $F$ and let
    $F_i = F \rvert_{\Om_i}$ for $i = 1,2$. For any closed
    $V \subseteq \curlX$ containing $(0, \lambda_0)$, let $C_V(F_i)$ denote the
    connected component of $V \cap \curlS(F_i)$ containing $(0, \lambda_0)$.
    Then,
    \[
        C_V(F_1) \subseteq C_V(F_2),
    \]
    with equality if $\Om_1$ is closed in $\curlX$ and
    $C_V(F_1) \cap \del \Om_1 = \emptyset$.
\end{lemma}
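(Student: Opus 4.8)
The plan is to prove $C_V(F_1) \subseteq C_V(F_2)$ by a pure topological argument and then handle the equality case with a clopen-set (connectedness) argument. Let me sketch both.

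First, the inclusion. Since $\Om_1 \subseteq \Om_2$, every non-trivial solution of $F_1(x,\lambda) = 0$ is a non-trivial solution of $F_2(x,\lambda) = 0$, i.e. the solution set of $F_1$ is contained in that of $F_2$. One must be slightly careful because $\curlS(F_i)$ is defined as a closure \emph{in $\Om_i$}, so $\curlS(F_1) = \overline{S_1}^{\,\Om_1}$ and $\curlS(F_2) = \overline{S_2}^{\,\Om_2}$ where $S_i$ denotes the set of non-trivial zeros of $F_i$; but $S_1 \subseteq S_2$ and $\Om_1 \subseteq \Om_2$, so $\overline{S_1}^{\,\Om_1} = \overline{S_1}^{\,\curlX} \cap \Om_1 \subseteq \overline{S_2}^{\,\curlX} \cap \Om_2 = \curlS(F_2)$. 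Intersecting with the closed set $V$, we get $V \cap \curlS(F_1) \subseteq V \cap \curlS(F_2)$. Now $C_V(F_1)$ is a connected subset of $V \cap \curlS(F_1)$, hence a connected subset of $V \cap \curlS(F_2)$ containing $(0,\lambda_0)$, and therefore lies inside the connected component $C_V(F_2)$ of that point. This gives the inclusion.

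For the equality, assume $\Om_1$ is closed in $\curlX$ and $C_V(F_1) \cap \del \Om_1 = \emptyset$. The goal is to show $C_V(F_2) \subseteq C_V(F_1)$. The natural approach is to show that $C_V(F_1)$ is \emph{relatively clopen} in $V \cap \curlS(F_2)$: since $C_V(F_2)$ is connected, contains $(0,\lambda_0) \in C_V(F_1)$, and meets the nonempty clopen set $C_V(F_1)$, connectedness forces $C_V(F_2) \subseteq C_V(F_1)$, and combined with the first part we get equality. That $C_V(F_1)$ is relatively closed in $V \cap \curlS(F_2)$ is reasonably clear: $C_V(F_1)$ is a connected component of the space $V \cap \curlS(F_1)$, which is closed (it is the intersection of closed sets $V$, $\Om_1$, and $\overline{S_1}^{\,\curlX}$), so $C_V(F_1)$ is closed in $\curlX$, hence closed in $V \cap \curlS(F_2)$. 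The real content is \emph{openness}.

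To see that $C_V(F_1)$ is open in $V \cap \curlS(F_2)$, I would argue as follows. Suppose $p \in C_V(F_1)$. Because $C_V(F_1) \cap \del\Om_1 = \emptyset$ and $\Om_1$ is closed, $p$ lies in the interior $\Int\Om_1$, so there is an open ball $B$ around $p$ in $\curlX$ with $B \subseteq \Int\Om_1$. On $B$ the zero sets of $F_1$ and $F_2$ coincide, so $B \cap \curlS(F_1) = B \cap \curlS(F_2)$ and hence $B \cap V \cap \curlS(F_2) = B \cap V \cap \curlS(F_1)$. Thus a neighbourhood of $p$ in $V \cap \curlS(F_2)$ agrees with a neighbourhood of $p$ in $V \cap \curlS(F_1)$, and the latter is contained in the component $C_V(F_1)$ \emph{provided} that neighbourhood is connected — which in general it need not be. The standard fix is to use that connected components of a space are open precisely when the space is locally connected, which $V \cap \curlS(F_i)$ need not be; so instead one works with \emph{quasicomponents} or uses a direct argument: any point $q$ in $B \cap V \cap \curlS(F_2)$ can be joined to $p$ within $B \cap V \cap \curlS(F_1) \subseteq V \cap \curlS(F_1)$? — not necessarily by a path, but the point is that $p$ and $q$ lie in the \emph{same} topological space $V\cap\curlS(F_1)$ near $p$. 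The clean resolution: show $C_V(F_1)$ is open in $V\cap\curlS(F_2)$ by showing its complement $(V\cap\curlS(F_2)) \setminus C_V(F_1)$ is closed. A point $q$ in this complement is either outside $\Int\Om_1$ (an open condition, since points near $q$ are then also outside the closed set... — careful, need $q \notin \Om_1$ or on $\del\Om_1$), or $q \in \Int\Om_1$ but in a different component of $V \cap \curlS(F_1)$; since $C_V(F_1)$ is closed, $q$ has a neighbourhood missing it, and shrinking into $\Int\Om_1$ keeps us in $V\cap\curlS(F_2) = V\cap\curlS(F_1)$ locally, so this neighbourhood stays in the complement. The genuinely delicate point — and the one I expect to be the main obstacle — is handling $q \in \del\Om_1$: here $q \notin C_V(F_1)$ by hypothesis, but points near $q$ could lie in $\Int\Om_1$ and \emph{could} belong to $C_V(F_1)$; ruling this out requires using that $C_V(F_1)$, being closed and disjoint from $\del\Om_1$, has a positive distance from $\del\Om_1$ when... no, $\del\Om_1$ need not be compact. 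So one must instead argue that $C_V(F_1)$ is closed in $\curlX$ and disjoint from $\del\Om_1$ means every point of $\del\Om_1$ has a neighbourhood disjoint from $C_V(F_1)$ (since the complement of the closed set $C_V(F_1)$ is open), and that neighbourhood, intersected with $V\cap\curlS(F_2)$, lies in the complement. I would carry out exactly this: complement-is-open via "$C_V(F_1)$ closed in $\curlX$" for the boundary and exterior points, and via "locally $\curlS(F_2)$ agrees with $\curlS(F_1)$ plus $C_V(F_1)$ closed" for interior points.
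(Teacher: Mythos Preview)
Your inclusion argument is fine and matches the paper's. The equality case, however, has a genuine gap.

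You aim to show $C_V(F_1)$ is clopen in $V\cap\curlS(F_2)$. Closedness is indeed easy. But openness is simply not available: connected components of a metric space are always closed but need not be open unless the space is locally connected, and there is no reason for $V\cap\curlS(F_1)$ (let alone $V\cap\curlS(F_2)$) to be locally connected. Other components of $V\cap\curlS(F_1)$ can accumulate onto $C_V(F_1)$; since they also sit inside $V\cap\curlS(F_2)$, the set $C_V(F_1)$ fails to be open there too. Your attempted workaround is logically tangled: you announce you will prove ``the complement is closed'' (equivalently, $C_V(F_1)$ open), but the argument you actually sketch---take $q$ in the complement, produce a neighbourhood of $q$ lying in the complement---establishes that the complement is \emph{open}, i.e.\ $C_V(F_1)$ is closed, which you already knew. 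The closing summary ``complement-is-open via $C_V(F_1)$ closed in $\curlX$'' confirms the direction has been reversed.

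The paper's route is different and uses compactness in an essential way, which your purely topological argument never invokes. Since $\lambda Kx + H(x,\lambda)$ is compact, bounded closed subsets of $F^{-1}(0)$ are compact; in particular $\curlS(F_1)\cap\barcurlX_{\lambda_0}(N)\cap V$ is compact once $N$ is chosen large enough to contain $C_V(F_1)$. Whyburn's separation lemma (Lemma~\ref{lem:disjoint-set-connection}) then splits this compact set into disjoint compact pieces $M_1\supseteq C_V(F_1)$ and $M_2$ containing everything that meets $\del\Om_1\cup\del\curlX_{\lambda_0}(N)$. One chooses an open neighbourhood $U$ of $M_1$ with $\barU\subseteq\Om_1\cap\curlX_{\lambda_0}(N)$ and $\del U$ disjoint from $M_1\cup M_2$; since $\barU\subseteq\Om_1$, this makes $\del U$ disjoint from $\curlS(F_2)\cap V$ as well. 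Connectedness then traps $C_V(F_2)\subseteq U$, and on $U$ the two solution sets agree, giving $C_V(F_2)\subseteq C_V(F_1)$. The Whyburn step---separating a component from a disjoint closed set inside a \emph{compact} space---is precisely what substitutes for the (unavailable) openness of the component; compactness is the missing ingredient in your attempt.
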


For this proof and for later results, we need to invoke a special case of a
result by Whyburn \cite[(9.3)]{whyburnTopologicalAnalysis1964}.

\begin{lemma} \label{lem:disjoint-set-connection}
    Let $M$ be a compact metric space. Let $A_1$ and $A_2$ be disjoint closed
    subsets of $M$, with $A_1$ a connected component of $M$. Then there exist
    disjoint compact subsets $M_1$ and $M_2$ of $M$ such that $A_1 \subseteq M_1$,
    $A_2 \subseteq M_2$ and $M = M_1 \cup M_2$.
\end{lemma}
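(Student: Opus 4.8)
The statement is a special case of \cite[(9.3)]{whyburnTopologicalAnalysis1964}; for a self-contained argument I would deduce it from the classical fact that, in a compact Hausdorff space, the connected component of a point equals its \emph{quasi-component}, i.e.\ the intersection of all closed-and-open (``clopen'') subsets containing it. Granting this, the decomposition $M = M_1 \cup M_2$ is produced by a single clopen set that contains $A_1$ and misses $A_2$, obtained by a finite-intersection-property argument.

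For the component/quasi-component identification: the quasi-component $Q_x$ of a point $x$ always contains its connected component, since a clopen set meeting a connected set must contain it. For the reverse inclusion it suffices to show $Q_x$ is connected when $M$ is compact Hausdorff. Suppose instead $Q_x = E \cup F$ with $E, F$ nonempty, disjoint, and closed in $Q_x$, hence closed in $M$ (as $Q_x$ is an intersection of closed sets); say $x \in E$. By normality of $M$ there are disjoint open sets $U \supseteq E$ and $W \supseteq F$. The closed set $M \setminus (U \cup W)$ is disjoint from $Q_x$, so the closed sets $V \cap (M \setminus (U \cup W))$, over clopen $V \ni x$, have empty intersection; by compactness finitely many already do, and their intersection $N$ is then a clopen neighbourhood of $x$ with $N \subseteq U \cup W$. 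Now $N \cap U = N \setminus W$ is clopen (open as $N \cap U$, closed as $N \cap (M \setminus W)$) and contains $x$, so $Q_x \subseteq N \cap U \subseteq U$; but $F \subseteq Q_x$ and $F \subseteq W$ with $U \cap W = \emptyset$ force $F = \emptyset$, a contradiction. Thus $Q_x$ is connected and equals the component of $x$.

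With this in hand, fix any $x \in A_1$. Since $A_1$ is a connected component of $M$, we have $A_1 = Q_x = \bigcap\{U : U \text{ clopen in } M,\ x \in U\}$. Because $A_1 \cap A_2 = \emptyset$ and $A_2$ is closed in the compact space $M$, the compact sets $U \cap A_2$ (over clopen $U \ni x$) have empty intersection, so there are clopen $U_1, \dots, U_n \ni x$ with $(U_1 \cap \cdots \cap U_n) \cap A_2 = \emptyset$. Set $M_1 := U_1 \cap \cdots \cap U_n$ and $M_2 := M \setminus M_1$. Then $M_1$ is a clopen neighbourhood of $x$, so $A_1 = Q_x \subseteq M_1$, while $M_1 \cap A_2 = \emptyset$, i.e.\ $A_2 \subseteq M_2$. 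Finally $M_1$ and $M_2$ are clopen, hence closed in the compact $M$ and therefore compact, they are disjoint, and $M_1 \cup M_2 = M$; this is the required decomposition. (If $A_2 = \emptyset$ one simply takes $M_1 = M$, $M_2 = \emptyset$.)

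The only real obstacle is the first step, the component/quasi-component identification: this is precisely where compactness enters — the statement fails in general — and the normality-plus-compactness argument above is its technical core. Everything afterwards is a routine application of the finite intersection property in a compact space.
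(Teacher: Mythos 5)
Your proof is correct. The paper offers no proof of this lemma at all --- it is quoted directly as a special case of Whyburn's result (9.3) --- so your argument supplies the standard self-contained derivation: the identification of components with quasi-components in a compact Hausdorff space (where compactness and normality do the real work, exactly as you note), followed by a finite-intersection-property argument producing a single clopen set $M_1$ containing $A_1$ and missing $A_2$. The details check out: $E$ and $F$ are indeed closed in $M$ because $Q_x$ is an intersection of closed sets, $N\cap U=N\setminus W$ is genuinely clopen, the family of clopen neighbourhoods of $x$ is closed under finite intersections so $M_1$ is a single clopen set, and the degenerate case $A_2=\emptyset$ is handled. Nothing is missing.
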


\begin{proof}[Proof of Lemma~\ref{lem:subset-branches}]
    For every $r > 0$, we define
    \[
        \curlX_{\lambda_0}(r) = \{(x, \mu) \in \curlX \mid \|x\| + |\lambda_0 - \mu| < r\}
    \]
    and denote the closure of $\curlX_{\lambda_0}(r)$ in $\curlX$ by
    $\barcurlX_{\lambda_0}(r)$.

    We see that $V \cap \curlS(F_1) \subseteq V \cap \curlS(F_2)$ and so, by
    considering connected components containing $(0, \lambda_0)$, we get
    $C_V(F_1) \subseteq C_V(F_2)$. To prove the equality case,
    suppose that $\Om_1$ is closed in $\curlX$ and
    $C_V(F_1) \cap \del \Om_1 = \emptyset$. Let $N > 0$ be such that
    $C_V(F_1) \subseteq \curlX_{\lambda_0}(N)$.
    We note that bounded closed subsets of $\curlX$ contained in $F^{-1}(0)$
    are compact since $\lambda K x + H(x, \lambda)$ is a compact map, and so
    $\curlS(F_1) \cap \barcurlX_{\lambda_0}(N) \cap V$ is compact.
    By Lemma~\ref{lem:disjoint-set-connection},
    $\curlS(F_1) \cap \barcurlX_{\lambda_0}(N) \cap V$ can be expressed
    as the union of disjoint compact sets $M_1$ and $M_2$ such that
    $C_V(F_1) \subseteq M_1$ and
    $\curlS(F_1) \cap \barcurlX_{\lambda_0}(N) \cap V
        \cap [\del \curlX_{\lambda_0}(N) \cup \del \Om_1] \subseteq M_2$.
    Since $M_1$ and $M_2$ are compact, we can find an open neighbourhood $U$ of
    $M_1$ with $\barU \subseteq \Om_1 \cap \curlX_{\lambda_0}(N)$
    such that $\del U$ and $\curlS(F_1) \cap \barcurlX_{\lambda_0}(N) \cap V$
    are disjoint. We observe that $\del U$ and $\curlS(F_2) \cap V$ are disjoint.

    We see that
    $C_V(F_2)$ is contained in $U$, since if $C_V(F_2)$ intersected
    $\curlX \setminus \barU$ non-trivially, then connectedness would imply that
    $C_V(F_2)$ and $\del U$ are not disjoint. Thus $C_V(F_2)$ must coincide
    with the connected component of $\curlS(F_2) \cap V \cap U$. Since
    \[
        \curlS(F_2) \cap V \cap U = \curlS(F_2) \cap V \cap U \cap \Om_1
        = \curlS(F_1) \cap V \cap U \subseteq \curlS(F_1) \cap V,
    \]
    by looking at the respective connected components containing $(0, \lambda_0)$
    we conclude that $C_V(F_2) \subseteq C_V(F_1)$ and so $C_V(F_1) = C_V(F_2)$.
\end{proof}

Now we are ready to reduce Theorem~\ref{thm:global-branches-odd} to the case
$\Om = \curlX$. We recall that $(0, \lambda_0) \in \Int \Om$ and so have that
the set
\[
    \Om_\delta := \{(x, \lambda) \in \Om \mid \dist((x, \lambda), \del \Om) > \delta\}
\]
is open and non-empty for all $\delta > 0$ sufficiently small.
Let $F_\delta = F \rvert_{\overline{\Om_\delta}}$ for all $\delta > 0$ and
define $h_\delta : \overline{\Om_\delta} \cup (\{0\} \times \R) \to X$ by
\[
    h_\delta(x, \lambda) =
    \begin{cases}
        \|x\|^{-1} H(x, \lambda), & x \neq 0, \\
        0, & x = 0.
    \end{cases}
\]
By our assumption on $H$, we have that $h_\delta$ is continuous.
Let $\widetilde{h}_\delta$ be an extension of $h_\delta$ to
$\curlX$ as given by Dugundji's extension theorem
\cite[Chapter~IX Theorem~6.1]{dugundjiTopology1978}. For all
$(x, \lambda) \in \curlX$, let
$\tildeH_\delta(x, \lambda) = \|x\|\widetilde{h}_\delta(x, \lambda)$
and $\tildeF_\delta(x, \lambda) = x - \lambda Kx - \tildeH_\delta(x, \lambda)$.

Assume that the proposition holds when $\Om = \curlX$ and 
suppose that $C_{\lambda_0}(F)$ is bounded with no accumulation points
on $\del \Om$. Then, since $C_{\lambda_0}(F)$ is compact and disjoint from
$\del \Om$, for some $\delta > 0$ sufficiently small we have
$C_{\lambda_0}(F) \subseteq \Om_{\delta}$. Applying
Lemma~\ref{lem:subset-branches} with $\Om_1 = \barOm_{\delta}$, $\Om_2 = \Om$
and $V = \curlX$, we get that
$C_{\lambda_0}(F_{\delta}) \subseteq C_{\lambda_0}(F) \subseteq \Om_{\delta}$
and so $C_{\lambda_0}(F_{\delta}) \cap \del \Om_{\delta} = \emptyset$.
Now by applying Lemma~\ref{lem:subset-branches} twice, both times with $\Om_1$ and
$V$ as before, once with $F$ and $\Om_2 = \Om$, and once with $\tildeF_\delta$
and $\Om_2 = \curlX$, we obtain
\[
    C_{\lambda_0}(F) = C_{\lambda_0}(F_\delta) = C_{\lambda_0}(\tildeF_\delta).
\]
Consequently, since
$C_{\lambda_0}(F)$ is bounded and $\tildeF_\delta$
is defined on all of $\curlX$, we may apply
Theorem~\ref{thm:global-branches-odd} to get that
$C_{\lambda_0}(F) = C_{\lambda_0}(\tildeF_\delta)$ contains an even number of trivial
solutions $(0, \mu)$ of $\tildeF_\delta(x, \lambda) = 0$ with $\mu$ of odd
multiplicity. Since $C_{\lambda_0}(F) \subseteq \Om_\delta$ and
$F = \tildeF_\delta$ on $\barOm_\delta$, we conclude
that $C_{\lambda_0}(F)$ contains an even number of trivial
solutions $(0, \mu)$ of $F(x, \lambda) = 0$ with $\mu$ of odd
multiplicity. Thus we have reduced Theorem~\ref{thm:global-branches-odd} to
the known case $\Om = \curlX$.

\section{Simple characteristic values}

Now we consider the special case where $\lambda_0$ is a simple
characteristic value. We start by giving the definition of a branch of solutions
in the direction of $v$ or $-v$, where $v$ is a unit length
$\lambda_0^{-1}$-eigenvector of $K$. Let $X'$ be the dual space of $X$,
and let $l \in X'$ be the $\lambda_0^{-1}$-eigenvector of the dual of
$K$ such that $\langle l, v \rangle = 1$. For all $0 \leq y < 1$, define
\[
    \curlC_y = \{(x, \lambda) \in \curlX \mid |\langle l, x \rangle| > y\|x\|\}.
\]
Let $\curlC_y^+$ and $\curlC_y^-$ be the subsets of $\curlC_y$ consisting of the
elements with $\langle l, x \rangle > y\|x\|$ and $\langle l, x \rangle < -y\|x\|$,
respectively. We say that $F(x, \lambda) = 0$ admits a
branch of solutions at $(0, \lambda_0)$ in the direction
of $v$ if there exists a connected set $Q^+ \subseteq C_{\lambda_0}$
containing $(0, \lambda_0)$ such that for every $y \in (0, 1)$, there exists
$\epsilon_y^+ > 0$ for which
\[
    \emptyset \neq Q^+ \cap \del \curlX_{\lambda_0}(\epsilon) \subseteq \curlC_y^+
\]
for all $0 < \epsilon < \epsilon_y$.
We then call $Q^+$ a branch of solutions in the direction of $v$. We replace $v$
with $-v$ and swap $+$ with $-$ to get the definition of a branch of solutions
in the direction of $-v$. 

Denote by $C_{\lambda_0}^+$ and $C_{\lambda_0}^-$ the closures in $\Om$ of the
unions of all branches of solutions in the directions of $v$ and $-v$ respectively.
Our desired result is the following Theorem.

\begin{thm} \label{thm:simple-bifurcation-branches}
    Let $\lambda_0$, $\Om$, $K$ and $F$ be as given in the introduction. Suppose
    that $\lambda_0$ is a simple characteristic value, $v$ is a unit length
    $\lambda_0^{-1}$-eigenvector of $K$, and $C_{\lambda_0}^+$ and $C_{\lambda_0}^-$
    are the closures in $\Om$ of the unions of all branches of solutions
    of \eqref{eqn:nonlinear-eigenvalue-problem}
    in the directions of $v$ and $-v$ respectively. Then at least one
    of the following alternatives holds:
    \begin{enumerate}
        \item each of $C_{\lambda_0}^+$ and $C_{\lambda_0}^-$ is unbounded or
              admits a limit point on $\del \Om$;
        \item $C_{\lambda_0}^+ \cap C_{\lambda_0}^- \neq \{(0, \lambda_0)\}$.
    \end{enumerate}
\end{thm}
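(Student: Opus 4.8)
The plan is to mimic the reduction used for Theorem~\ref{thm:global-branches-odd}: reduce the problem on a general neighbourhood $\Om$ to the already-known case $\Om = \curlX$, which is Dancer's Theorem~2 in \cite{dancerStructureSolutionsNonlinear1973}. Suppose, for contradiction, that neither alternative holds. Then $C_{\lambda_0}^+ \cap C_{\lambda_0}^- = \{(0,\lambda_0)\}$, and (without loss of generality) $C_{\lambda_0}^+$ is bounded and admits no limit point on $\del\Om$. Since $C_{\lambda_0}^+$ is closed in $\Om$ and bounded with no boundary accumulation points, it is compact and disjoint from $\del\Om$, so it lies in $\Om_\delta$ for some small $\delta > 0$; the same $\delta$ (shrinking if necessary) works for $C_{\lambda_0}^-$ if that one is also bounded away from $\del\Om$, but we only need it for $C_{\lambda_0}^+$.

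Next I would build, exactly as in the paragraph following Lemma~\ref{lem:subset-branches}, the globally-defined map $\tildeF_\delta$ that agrees with $F$ on $\barOm_\delta$. The crucial point is that branches of solutions in the direction of $v$ are, by definition, connected subsets of $C_{\lambda_0}$ lying (near $(0,\lambda_0)$) in the cones $\curlC_y^+$, and the cone condition is local: it only constrains the branch on the small spheres $\del\curlX_{\lambda_0}(\epsilon)$ for $\epsilon$ small. So if I can show $C_{\lambda_0}^+(F) = C_{\lambda_0}^+(\tildeF_\delta)$, then I can transport Dancer's global dichotomy. To get this identity, first observe $C_{\lambda_0}^+(F) \subseteq C_{\lambda_0}(F)$, and by Lemma~\ref{lem:subset-branches} (applied twice, as before, with $\Om_1 = \barOm_\delta$ and $V = \curlX$) we have $C_{\lambda_0}(F) = C_{\lambda_0}(F_\delta) = C_{\lambda_0}(\tildeF_\delta)$ once we know $C_{\lambda_0}(F)$ is bounded with no limit point on $\del\Om$. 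That boundedness is itself a consequence of the failure of alternative~1 for $C_{\lambda_0}^+$ together with a separation argument: if $C_{\lambda_0}$ were unbounded or met $\del\Om$, one would need to argue the unboundedness can be localised to one of $C_{\lambda_0}^\pm$. Actually the cleaner route is to first establish, using Lemma~\ref{lem:subset-branches} and the known $\Om=\curlX$ result, that $C_{\lambda_0}(F)$ equals $C_{\lambda_0}(\tildeF_\delta)$ whenever $C_{\lambda_0}(F)$ is compact in $\Om$, and to note that since solutions of $F=0$ and of $\tildeF_\delta=0$ coincide on $\barOm_\delta$ and branch-membership is a local-plus-connectedness condition, the forward inclusion $C_{\lambda_0}^+(F) \subseteq C_{\lambda_0}^+(\tildeF_\delta)$ and the reverse both follow from $C_{\lambda_0}(F) = C_{\lambda_0}(\tildeF_\delta) \subseteq \Om_\delta$.

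With $C_{\lambda_0}^\pm(F) = C_{\lambda_0}^\pm(\tildeF_\delta)$ in hand, Dancer's Theorem~2 applied to the globally-defined $\tildeF_\delta$ says that either both $C_{\lambda_0}^\pm(\tildeF_\delta)$ are unbounded or they intersect away from $(0,\lambda_0)$. In the first case $C_{\lambda_0}^+(F)$ is unbounded, contradicting our assumption; in the second case $C_{\lambda_0}^+(F) \cap C_{\lambda_0}^-(F) \neq \{(0,\lambda_0)\}$, contradicting the negation of alternative~2. Either way we reach a contradiction, proving the theorem.

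I expect the main obstacle to be the bookkeeping that shows branch-structure is preserved under the restriction/extension $F \leftrightarrow \tildeF_\delta$ — i.e. that a branch of solutions in the direction of $v$ for one map corresponds to one for the other. The point requiring care is that the defining condition for a branch quantifies over \emph{all} $y \in (0,1)$ with the threshold $\epsilon_y$ depending on $y$, so I must check that the connected set $Q^+$ witnessing a branch for $\tildeF_\delta$ actually lies inside $\Om_\delta$ near $(0,\lambda_0)$ (which it does, since $Q^+ \subseteq C_{\lambda_0}(\tildeF_\delta) \subseteq \Om_\delta$) and hence is a set of genuine $F$-solutions satisfying the same cone conditions, and conversely. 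Once that correspondence is nailed down, the closures of the unions of branches also agree, and the reduction goes through verbatim.
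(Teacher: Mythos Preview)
Your reduction strategy has a genuine gap. The negation of alternative~1 only tells you that \emph{one} of $C_{\lambda_0}^+$, $C_{\lambda_0}^-$ --- say $C_{\lambda_0}^+$ --- is bounded with no limit point on $\del\Om$. It says nothing about $C_{\lambda_0}^-$ or about $C_{\lambda_0}$ itself: both may well be unbounded or approach $\del\Om$. Consequently the extension argument (Lemma~\ref{lem:subset-branches} applied with $V = V_\epsilon := \curlX \setminus (\curlX_{\lambda_0}(\epsilon)\cap\curlC_y^{-\kappa})$) can deliver $C_{\lambda_0}^+(F) = C_{\lambda_0}^+(\tildeF_\delta)$, but \emph{not} $C_{\lambda_0}^-(F) = C_{\lambda_0}^-(\tildeF_\delta)$, since the equality clause of that lemma requires the relevant component for $F_\delta$ to avoid $\del\Om_\delta$. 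Dancer's Theorem~2 then produces a point $p \neq (0,\lambda_0)$ in $C_{\lambda_0}^+(\tildeF_\delta)\cap C_{\lambda_0}^-(\tildeF_\delta)$; you know $p \in C_{\lambda_0}^+(F)$, but you cannot conclude $p \in C_{\lambda_0}^-(F)$, because the connected set in $\curlS(\tildeF_\delta)\cap V_\epsilon$ joining $p$ to $(0,\lambda_0)$ may leave $\barOm_\delta$ and so fail to consist of genuine $F$-solutions. Your ``separation argument'' sketch does not close this, and neither does the ``cleaner route'', which still presupposes compactness of $C_{\lambda_0}(F)$ in $\Om$.

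The paper circumvents this by proving the sharper Proposition~\ref{prop:simple-bifurcation-special}: if $C_{\lambda_0}^\kappa$ is bounded and away from $\del\Om$, then a connected piece $T_\epsilon^\kappa$ \emph{of $C_{\lambda_0}^\kappa$ itself}, lying in $\overline{\curlC_y^{-\kappa}}$, reaches $\del\curlX_{\lambda_0}(\epsilon)$ for all small $\epsilon$. Because $T_\epsilon^\kappa \subseteq C_{\lambda_0}^\kappa$, the reduction to $\Om = \curlX$ only needs control over the single bounded branch; and because $T_\epsilon^\kappa$ is connected, contains $(0,\lambda_0)$ and sits in $\overline{\curlC_y^{-\kappa}}$, it automatically lies in $C_{\lambda_0,\epsilon}^{-\kappa}(F) \subseteq C_{\lambda_0}^{-\kappa}(F)$ --- no comparison with $C_{\lambda_0}^{-\kappa}(\tildeF_\delta)$ is needed. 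Establishing Proposition~\ref{prop:simple-bifurcation-special} for $\Om = \curlX$ requires reopening Dancer's argument (via his Lemma~3, stated here as Proposition~\ref{prop:simple-continuum}, together with the auxiliary connectedness Lemma~\ref{lem:connected-sets}) rather than invoking his Theorem~2 as a black box.
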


Similarly to Theorem~\ref{thm:global-branches-odd}, both alternatives of
Theorem~\ref{thm:simple-bifurcation-branches} can occur. Moreover, the first
alternative cannot be strengthened to say that $C_{\lambda_0}^+$ and
$C_{\lambda_0}^-$ are both unbounded or both admit an accumulation point on
$\del \Om$. A counter-example is $H(x, \lambda) \equiv 0$ on domain
$\Om = \overline{\curlC_0^+} \cup (\overline{\curlC_0^-} \cap \curlX_{\lambda_0}(N))$,
for any $N > 0$. In this case, $x \mapsto F(x, \lambda)$ is a linear map for every
fixed $\lambda$, with kernel the $\lambda^{-1}$-eigenspace of $K$ for
$\lambda \neq 0$. It follows that $C_{\lambda_0}^-$ is bounded,
$C_{\lambda_0}^+$ does not have an accumulation point on $\del \Om$ and
$C_{\lambda_0}^+ \cap C_{\lambda_0}^- = \{(0, \lambda_0)\}$.

To avoid duplication, we will use $\kappa$ to denote one of
$+$ and $-$ and will interpret $-\kappa$ in the obvious way. Fix $0 < y < 1$.
By \cite[Lemma~1.2]{rabinowitzGlobalResultsNonlinear1971},
there exists $S > 0$ such that $\barcurlX_{\lambda_0}(S) \subseteq \Int \Om$
and
\begin{equation} \label{eqn:local-cone}
    \barcurlX_{\lambda_0}(S) \cap \curlS \setminus \{(0, \lambda_0)\}
        \subseteq \curlC_y.
\end{equation}
For every $0 < \epsilon < S$, let $C_{\lambda_0,\epsilon}^\kappa$ be
the connected component of
$C_{\lambda_0} \setminus (\curlX_{\lambda_0}(\epsilon) \cap \curlC_y^{-\kappa})$
containing $(0, \lambda_0)$.
We notice that $C_{\lambda_0,\epsilon}^\kappa \supseteq C_{\lambda_0,\epsilon'}^\kappa$
for all $0 < \epsilon < \epsilon' < S$, and so
\[
    \bigcup_{0 < \epsilon < S} C_{\lambda_0,\epsilon}^\kappa
    = \bigcup_{0 < \epsilon < \epsilon'} C_{\lambda_0,\epsilon}^\kappa
\]
for all $0 < \epsilon' < S$.
Also, we deduce from \eqref{eqn:local-cone} that, regardless of $y$, every branch
of solutions is contained in some $C_{\lambda_0,\epsilon}^\kappa$.
Thus $C_{\lambda_0}^\kappa$ is the closure of
$\bigcup_{0 < \epsilon < S} C_{\lambda_0,\epsilon}^\kappa$.
We note that $C_{\lambda_0}^\kappa$ is connected as the closure of a union of
connected sets sharing a point
\cite[Chapter~V Theorem~1.5~\&~1.6]{dugundjiTopology1978}.

Rather than proving directly that $C_{\lambda_0}^+$ and $C_{\lambda_0}^-$
satisfy at least one of the alternatives in
Theorem~\ref{thm:simple-bifurcation-branches}, we will show the following
stronger result.

\begin{prop} \label{prop:simple-bifurcation-special}
    If $C_{\lambda_0}^\kappa$ is bounded and disjoint from $\del \Om$ for
    $\kappa \in \{\pm\}$, then the connected component $T_\epsilon^\kappa$ of
    $C_{\lambda_0}^\kappa \cap \barcurlX_{\lambda_0}(\epsilon) \cap \overline{\curlC_y^{-\kappa}}$
    containing $(0, \lambda_0)$ intersects $\del \curlX_{\lambda_0}(\epsilon)$
    non-trivially for all $\epsilon > 0$ sufficiently small.
\end{prop}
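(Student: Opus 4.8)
The plan is to reduce the statement to the globally defined case $\Om = \curlX$ by means of Lemma~\ref{lem:subset-branches}, paralleling the reduction of Theorem~\ref{thm:global-branches-odd} in Section~2, and then to settle that case by a contradiction argument combining Lemma~\ref{lem:disjoint-set-connection} with the local degree-theoretic picture at a simple characteristic value underlying the results of Rabinowitz and Dancer.

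For the reduction, the point to check is that each $C_{\lambda_0,\epsilon}^\kappa$ is of the form to which Lemma~\ref{lem:subset-branches} applies. Set $V_\epsilon := \curlX \setminus (\curlX_{\lambda_0}(\epsilon) \cap \curlC_y^{-\kappa})$; since $\curlX_{\lambda_0}(\epsilon) \cap \curlC_y^{-\kappa}$ is open, $V_\epsilon$ is a closed subset of $\curlX$ containing $(0,\lambda_0)$. Every connected subset of $\curlS(F)$ through $(0,\lambda_0)$ lies in $C_{\lambda_0}(F)$, so the component of $(0,\lambda_0)$ in $\curlS(F) \cap V_\epsilon$ agrees with the one in $C_{\lambda_0}(F) \cap V_\epsilon$; that is, $C_{\lambda_0,\epsilon}^\kappa(F) = C_{V_\epsilon}(F)$ in the notation of Lemma~\ref{lem:subset-branches}. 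Now suppose $C_{\lambda_0}^\kappa(F)$ is bounded and disjoint from $\del\Om$, hence compact; then $C_{\lambda_0}^\kappa(F) \subseteq \Om_\delta$ for all small $\delta > 0$, and since $C_{V_\epsilon}(F) = C_{\lambda_0,\epsilon}^\kappa(F) \subseteq C_{\lambda_0}^\kappa(F)$ for every $\epsilon$ we get $C_{V_\epsilon}(F) \subseteq \Om_\delta$. Taking $F_\delta$, $\widetilde{h}_\delta$ and $\tildeF_\delta$ as in Section~2, two applications of Lemma~\ref{lem:subset-branches} --- with $\Om_1 = \barOm_\delta$, $V = V_\epsilon$, once with $F$ and $\Om_2 = \Om$ and once with $\tildeF_\delta$ and $\Om_2 = \curlX$ --- give $C_{V_\epsilon}(F) = C_{V_\epsilon}(\tildeF_\delta)$ for every $\epsilon$. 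Because $\tildeF_\delta$ coincides with $F$ on a neighbourhood of $(0,\lambda_0)$, the characteristic value $\lambda_0$, the eigenvectors $v, l$ and the cone estimate \eqref{eqn:local-cone} are unchanged; hence $C_{\lambda_0}^\kappa(\tildeF_\delta) = C_{\lambda_0}^\kappa(F)$ and $T_\epsilon^\kappa(\tildeF_\delta) = T_\epsilon^\kappa(F)$ for all small $\epsilon$. It therefore suffices to prove the proposition for $\tildeF_\delta$; in other words, we may assume $\Om = \curlX$ and that $C_{\lambda_0}^\kappa$ is bounded.

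Assume then $\Om = \curlX$ and $C_{\lambda_0}^\kappa$ compact. Fix $\epsilon > 0$ with $\barcurlX_{\lambda_0}(\epsilon) \subseteq \curlX_{\lambda_0}(S)$ and with $\lambda_0$ the only characteristic value of $K$ in $[\lambda_0-\epsilon, \lambda_0+\epsilon]$, and suppose, for contradiction, that $T_\epsilon^\kappa$ does not meet $\del\curlX_{\lambda_0}(\epsilon)$. Then $M := C_{\lambda_0}^\kappa \cap \barcurlX_{\lambda_0}(\epsilon) \cap \overline{\curlC_y^{-\kappa}}$ is a compact metric space, $T_\epsilon^\kappa$ is one of its components, and $T_\epsilon^\kappa$ is disjoint from the closed set $M \cap \del\curlX_{\lambda_0}(\epsilon)$; Lemma~\ref{lem:disjoint-set-connection} then produces disjoint compacta $M_1, M_2$ with $M = M_1 \cup M_2$, $T_\epsilon^\kappa \subseteq M_1$ and $M \cap \del\curlX_{\lambda_0}(\epsilon) \subseteq M_2$, so that $M_1$ is a compact subset of the open ball $\curlX_{\lambda_0}(\epsilon)$. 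Choosing a bounded open $U$ with $M_1 \subseteq U$, $\overline U \subseteq \curlX_{\lambda_0}(\epsilon)$, $\overline U \cap M_2 = \emptyset$ and $\del U \cap M = \emptyset$, one sees that any point of $C_{\lambda_0}^\kappa \cap \del U$ is a non-trivial solution lying in $\curlX_{\lambda_0}(S)$ but outside $\overline{\curlC_y^{-\kappa}}$, hence, by \eqref{eqn:local-cone}, in $\curlC_y^\kappa$. Thus $(0,\lambda_0) \in M_1 \subseteq U$ while $C_{\lambda_0}^\kappa$ can exit $U$ only through the cone $\curlC_y^\kappa$.

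Here is where the local bifurcation analysis enters, and this is the step I expect to be the main obstacle. Since $\lambda_0$ is the only characteristic value in $[\lambda_0-\epsilon, \lambda_0+\epsilon]$, the degree argument behind Rabinowitz's Lemma~1.2 \cite[Lemma~1.2]{rabinowitzGlobalResultsNonlinear1971}, refined as in the proof of Dancer's \cite[Theorem~2]{dancerStructureSolutionsNonlinear1973}, shows that $\curlS$ has near $(0,\lambda_0)$ a branch of solutions in the direction of $\kappa v$ and one in the direction of $-\kappa v$, each meeting $\del\curlX_{\lambda_0}(\epsilon)$, which together carry the full (nonzero) change of Leray--Schauder index of $0$ across $\lambda_0$. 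The $\kappa v$ branch lies in $C_{\lambda_0}^\kappa$, so $C_{\lambda_0}^\kappa \not\subseteq \overline U$. One must then use the boundedness of $C_{\lambda_0}^\kappa$ --- which confines $C_{\lambda_0}^\kappa$, and every $C_{\lambda_0,\epsilon'}^\kappa = C_{V_{\epsilon'}}$, to the fixed compact set $C_{\lambda_0}^\kappa$ --- to run a Leray--Schauder degree count on a bounded open set containing $C_{\lambda_0}^\kappa$ in which $U$ contributes the full index jump at $\lambda_0$, yet the only solutions of $F = 0$ crossing $\del U$ lie in $\curlC_y^\kappa$: the resulting degree identity cannot hold, and this contradiction proves that $T_\epsilon^\kappa$ meets $\del\curlX_{\lambda_0}(\epsilon)$ after all. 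The delicate point --- and the reason the argument must be carried out relative to $C_{\lambda_0}^\kappa$ rather than the full set $\curlS$ --- is that a priori $C_{\lambda_0}^\kappa$ is only the closure of the $\kappa v$ branches and need not contain any solution on the $-\kappa$ side of $\curlC_y$ near $(0,\lambda_0)$; it is precisely the boundedness of $C_{\lambda_0}^\kappa$ that forces the ``$-\kappa v$ end'' produced by the index jump to reappear inside $C_{\lambda_0}^\kappa$, and making this forcing rigorous --- tracking how the part of each $C_{\lambda_0,\epsilon'}^\kappa$ beyond radius $\epsilon'$ behaves as $\epsilon' \to 0$ --- is where the bulk of the work lies.
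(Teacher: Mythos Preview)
Your reduction to $\Om = \curlX$ is correct and is exactly what the paper does (you are in fact more explicit than the paper, which only says ``by adapting the reduction argument of Theorem~\ref{thm:global-branches-odd}''). Your setup of the contradiction in the global case---applying Lemma~\ref{lem:disjoint-set-connection} to $C_{\lambda_0}^\kappa \cap \barcurlX_{\lambda_0}(\epsilon) \cap \overline{\curlC_y^{-\kappa}}$ and producing an open $U$ around $(0,\lambda_0)$ with $\overline U \subseteq \curlX_{\lambda_0}(\epsilon)$ whose boundary misses $C_{\lambda_0}^\kappa \cap \overline{\curlC_y^{-\kappa}}$---also matches the paper precisely.

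The gap is exactly where you flag it: the final ``degree identity'' paragraph is not a proof but a hope. A Leray--Schauder count on a bounded open set containing $C_{\lambda_0}^\kappa$ is not obviously informative, because $C_{\lambda_0}^\kappa$ is not a component of $\curlS$; solutions on $\del U \cap \curlC_y^{-\kappa}$ may well exist outside $C_{\lambda_0}^\kappa$, and nothing you have set up isolates $C_{\lambda_0}^\kappa$ from the rest of $\curlS$ in a way that makes a degree on a neighbourhood of it meaningful. You correctly diagnose that the difficulty is to show the ``$-\kappa v$ end'' reappears inside $C_{\lambda_0}^\kappa$, but you do not do it.

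The paper resolves this without any new degree computation. It invokes Dancer's result (stated as Proposition~\ref{prop:simple-continuum}) \emph{levelwise}: each $C_{\lambda_0,\epsilon}^\kappa$ is a genuine connected component to which Dancer's argument applies, and boundedness forces $C_{\lambda_0,\epsilon}^\kappa \cap \del\curlX_{\lambda_0}(\epsilon) \cap \curlC_y^{-\kappa} \neq \emptyset$ for every $0 < \epsilon < S$. The passage from these levelwise hits to a statement about $C_{\lambda_0}^\kappa$ is then purely topological: the paper proves a separate lemma (Lemma~\ref{lem:connected-sets}) that the set
\[
Y = \bigl(C_{\lambda_0}^\kappa \cap \curlX_{\lambda_0}(\epsilon_1) \cap \curlC_y^{-\kappa}\bigr) \cup \bigl(\del\curlX_{\lambda_0}(\epsilon_1) \cap \overline{\curlC_y^{-\kappa}}\bigr)
\]
is connected. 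Since $(0,\lambda_0) \in \overline Y$ (by the levelwise hits for small radii) and $Y$ meets $\curlX \setminus U$ (on the sphere $\del\curlX_{\lambda_0}(\epsilon_1)$), connectedness forces $\del U \cap \overline Y \neq \emptyset$, contradicting the construction of $U$. This connectedness lemma, not a degree identity, is the missing idea; it is exactly the ``tracking as $\epsilon' \to 0$'' you allude to, but carried out as a point-set argument rather than an index computation.
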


We verify that this proposition does in fact imply
Theorem~\ref{thm:simple-bifurcation-branches}. If the first alternative of the
theorem does not hold, then $C_{\lambda_0}^\kappa$, and so $T_\epsilon^\kappa$
for all $0 < \epsilon < S$, is bounded
and disjoint from $\del \Om$ for some $\kappa \in \{\pm\}$.
Since $T_\epsilon^\kappa$
is connected and $(0, \lambda_0) \in T_\epsilon^\kappa$, 
by definition of $C_{\lambda_0, \epsilon}^{-\kappa}$ we have
$T_\epsilon^\kappa \subseteq C_{\lambda_0, \epsilon}^{-\kappa} \subseteq C_{\lambda_0}^{-\kappa}$.
From the proposition we have that $T_\epsilon^\kappa$ intersects
$\del \curlX_{\lambda_0}(\epsilon)$ non-trivially for all $\epsilon > 0$
sufficiently small, and so we conclude that the
second alternative of the theorem holds.

To prove Proposition~\ref{prop:simple-bifurcation-special}, we first reduce it
to the case $\Om = \curlX$.
Assume that the proposition holds when $\Om = \curlX$. Recall the definitions
of $\Om_\delta$, $F_\delta$ and $\tildeF_\delta$ from earlier.
Suppose that $C_{\lambda_0}^\kappa(F)$ is bounded without any accumulation 
points on $\del \Om$.
By adapting the reduction argument of Theorem~\ref{thm:global-branches-odd} to use
$C_{\lambda_0}^\kappa(F)$ instead of $C_{\lambda_0}(F)$, we get
$C_{\lambda_0}^\kappa(\tildeF_\delta) = C_{\lambda_0}^\kappa(F)$ for
all $\delta > 0$ sufficiently small.
We apply Proposition~\ref{prop:simple-bifurcation-special} to $\tildeF_\delta$
to obtain that the connected component of
\[
    C_{\lambda_0}^\kappa(\tildeF_\delta) \cap \barcurlX_{\lambda_0}(\epsilon)
        \cap \overline{\curlC_y^{-\kappa}}
    = C_{\lambda_0}^\kappa(F) \cap \barcurlX_{\lambda_0}(\epsilon)
        \cap \overline{\curlC_y^{-\kappa}}
\]
containing $(0, \lambda_0)$ intersects $\del \curlX_{\lambda_0}(\epsilon)$ for
all $\epsilon$ sufficiently small. Thus we have reduced the proposition to the
case $\Om = \curlX$.

Finally, we adapt the proof of \cite[Theorem~2]{dancerStructureSolutionsNonlinear1973}
to show that Proposition~\ref{prop:simple-bifurcation-special} holds when
$\Om = \curlX$. We will invoke the following result due to Dancer
\cite[Lemma~3]{dancerStructureSolutionsNonlinear1973}.

\begin{prop} \label{prop:simple-continuum}
    Let $\Om = \curlX$ and take $S$ as in \eqref{eqn:local-cone}. 
    Then for every $0 < \epsilon < S$, the set
    $C_{\lambda_0,\epsilon}^\kappa$ either is unbounded or intersects
    $\del \curlX_{\lambda_0}(\epsilon) \cap \curlC_y^{-\kappa}$ non-trivially.
\end{prop}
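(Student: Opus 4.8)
The plan is to argue by contradiction and to reduce the claim to a Leray--Schauder degree computation of the kind that underlies the global bifurcation theorem for characteristic values of odd multiplicity. Suppose that, for some $0 < \epsilon_0 < S$, the set $C := C_{\lambda_0,\epsilon_0}^\kappa$ is bounded and disjoint from $\del\curlX_{\lambda_0}(\epsilon_0) \cap \curlC_y^{-\kappa}$; write $U := \curlX_{\lambda_0}(\epsilon_0) \cap \curlC_y^{-\kappa}$, an open subset of $\curlX$. Since $\lambda Kx + H(x,\lambda)$ is compact, bounded closed subsets of $F^{-1}(0)$ are compact, so $C$ is compact; fix $N > 0$ with $C \subseteq \curlX_{\lambda_0}(N)$. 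As in the proof of Lemma~\ref{lem:subset-branches}, $C$ is the connected component of $(0,\lambda_0)$ in $\curlS \setminus U$, and hence also in the compact metric space $M := \curlS \cap \barcurlX_{\lambda_0}(N) \setminus U$.

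First I would isolate $C$. By \eqref{eqn:local-cone}, any zero of $F$ in $\barcurlX_{\lambda_0}(S)$ lying on $\del\curlC_y^{-\kappa}$ has $x = 0$, and, the characteristic values of $K$ being isolated, the only point of $\curlS$ with $x = 0$ near $(0,\lambda_0)$ is $(0,\lambda_0)$ itself. Combined with the hypothesis on the $\curlC_y^{-\kappa}$ part, this shows $C$ is disjoint from the closed set
\[
    B := M \cap \bigl[\del\curlX_{\lambda_0}(N) \cup \bigl(\del\curlX_{\lambda_0}(\epsilon_0) \cap \overline{\curlC_y^{-\kappa}}\bigr)\bigr].
\]
Lemma~\ref{lem:disjoint-set-connection} then separates $C$ from $B$ inside $M$, and thickening as in Lemma~\ref{lem:subset-branches} produces a bounded open $W \subseteq \curlX$ with $C \subseteq W$, $\overline{W} \subseteq \curlX_{\lambda_0}(N)$ and $\del W \cap \curlS \subseteq U$.

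Next I would carve the wedge out of $W$, setting $W^* := W \setminus \overline{U}$. The part of $\del W^*$ not already in $\del W$ lies in $\del U \subseteq \del\curlX_{\lambda_0}(\epsilon_0) \cup \del\curlC_y^{-\kappa}$ within $\barcurlX_{\lambda_0}(\epsilon_0) \subseteq \barcurlX_{\lambda_0}(S)$, so by \eqref{eqn:local-cone} it carries no zero of $F$ except possibly $(0,\lambda_0)$; together with $\del W \cap \curlS \subseteq U \subseteq \overline{U}$ this yields $\del W^* \cap \curlS \subseteq \{(0,\lambda_0)\}$. Writing $A_\lambda := \{x \in X \mid (x,\lambda) \in A\}$ for the $\lambda$-slice of a set $A$, one checks moreover that $(0,\lambda) \in \overline{U}$ whenever $|\lambda - \lambda_0| \le \epsilon_0$, so $0 \notin W^*_\lambda$ for such $\lambda$. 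Consequently, for every $\lambda \neq \lambda_0$ with $|\lambda - \lambda_0| < N$ the map $F(\cdot,\lambda)$ has no zero on $\del W^*_\lambda$, so $\deg\bigl(F(\cdot,\lambda), W^*_\lambda, 0\bigr)$ is defined and, by homotopy invariance, equals a constant $d^-$ for $\lambda \in (\lambda_0 - N, \lambda_0)$ and a constant $d^+$ for $\lambda \in (\lambda_0, \lambda_0 + N)$.

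The final step, and what I expect to be the real obstacle, is to show $d^+ \neq d^-$, which contradicts the above. Fixing a small, generically chosen $\rho \in (0,\epsilon_0)$ and using additivity and excision, $\deg\bigl(F(\cdot,\lambda), W^*_\lambda, 0\bigr)$ splits, for $\lambda$ close to $\lambda_0$, into a contribution from the zeros of $F(\cdot,\lambda)$ in $W^*_\lambda$ outside $\curlX_{\lambda_0}(\rho)$, which stays away from $(0,\lambda_0)$ and hence is the same on both sides of $\lambda_0$, and a local contribution from the zeros inside $\curlX_{\lambda_0}(\rho)$, which by \eqref{eqn:local-cone} and $0 \notin W^*_\lambda$ all lie in $\curlC_y^\kappa$. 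Comparing this local contribution with the full small-ball index $\deg\bigl(F(\cdot,\lambda), \curlX_{\lambda_0}(\rho)_\lambda, 0\bigr) = \deg\bigl(I - \lambda K, \curlX_{\lambda_0}(\rho)_\lambda, 0\bigr)$, which changes sign as $\lambda$ crosses $\lambda_0$ because $\lambda_0^{-1}$ is an eigenvalue of $K$ of odd algebraic multiplicity, forces $d^+ \neq d^-$. The hard part is making this last local bookkeeping rigorous under only the hypothesis that $h$ is continuous (no differentiability of $H$): one must simultaneously isolate the trivial solution $x = 0$, the piece of the $\kappa$-branch in $\curlC_y^\kappa$, and the finitely many other nearby zeros, and control their dependence on $\lambda$ uniformly as $\lambda \to \lambda_0$. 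The discreteness of $\Char(K)$ near $\lambda_0$ is used throughout, both here and to make the isolation of $C$ in the previous step compatible with the local structure of $\curlS$ at the bifurcation point.
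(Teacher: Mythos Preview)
The paper does not prove this proposition at all; it simply quotes it as \cite[Lemma~3]{dancerStructureSolutionsNonlinear1973} and uses it as a black box. Your proposal is therefore not competing with anything in the paper --- you are in effect trying to reconstruct Dancer's degree-theoretic proof, and the overall architecture (Whyburn isolation of $C$, then a Leray--Schauder degree that should jump across $\lambda_0$) is indeed the right one.

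There is, however, a concrete gap in the step where you pass from ``$0\notin W^*_\lambda$'' to ``$F(\cdot,\lambda)$ has no zero on $\partial W^*_\lambda$''. The first statement does not give the second: for $\lambda$ close to $\lambda_0$ with $(0,\lambda)\in W$, the points $(t v,\lambda)$ with small $t>0$ lie in $W$ (since $W$ is open) and lie outside $\overline{\curlC_y^{-\kappa}}$ (since $\langle l,tv\rangle=t>0$), hence outside $\overline U$. Thus $(tv,\lambda)\in W^*$, so $0\in\overline{W^*_\lambda}$; combined with $0\notin W^*_\lambda$ this gives $0\in\partial W^*_\lambda$, and since $F(0,\lambda)=0$ the degree $\deg(F(\cdot,\lambda),W^*_\lambda,0)$ is \emph{not} defined for such $\lambda$. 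The trivial branch $\{0\}\times\R$ sits exactly on the edge of the excised wedge, and carving out $\overline U$ from $W$ leaves it on the boundary rather than removing it.

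This is precisely the delicate point in Dancer's (and Rabinowitz's one-sided) argument, and it is not the ``final local bookkeeping'' you flag but a step earlier. The standard repair is not to excise $\overline U$ outright but to keep a full small ball $\curlX_{\lambda_0}(\rho)$ inside the domain so that $(0,\lambda)$ is interior for $|\lambda-\lambda_0|<\rho$; one then computes $\deg(F(\cdot,\lambda),W_\lambda,0)$ on the Whyburn set $W$ itself (which is constant in $\lambda$ and vanishes for $|\lambda-\lambda_0|$ large), and splits it for $\lambda$ near $\lambda_0$ via additivity into the index of the trivial solution, the contribution of the $\kappa$-branch in $\curlC_y^\kappa\cap\curlX_{\lambda_0}(\rho)$, and a remainder that is continuous across $\lambda_0$. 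The simplicity of $\lambda_0$ then lets one identify the $\kappa$-branch contribution with that of the $-\kappa$-branch (or with the trivial index jump), forcing the $-\kappa$-branch to be present --- which is the desired contradiction. Your outline can be salvaged along these lines, but as written the degree you introduce does not exist.
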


We also need the following lemma.

\begin{lemma}  \label{lem:connected-sets}
    Let $\kappa \in \{\pm\}$.
    For every $0 < \epsilon_1 < S$, if
    $C_{\lambda_0,\epsilon_1}^\kappa \cap \del \curlX_{\lambda_0}(\epsilon_1)
        \cap \overline{\curlC_y^{-\kappa}} \neq \emptyset$ then the set
    \[
        \biggl( C_{\lambda_0}^\kappa \cap \curlX_{\lambda_0}(\epsilon_1) \cap \curlC_y^{-\kappa} \biggr)
            \cup \biggl( \del \curlX_{\lambda_0}(\epsilon_1) \cap \overline{\curlC_y^{-\kappa}} \biggr)
    \]
    is connected.
\end{lemma}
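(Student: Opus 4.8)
The plan is to reduce the connectedness of the displayed set to a statement about the connected components of $A := C_{\lambda_0}^{\kappa} \cap \curlX_{\lambda_0}(\epsilon_1) \cap \curlC_y^{-\kappa}$ and the geometry of $B := \del\curlX_{\lambda_0}(\epsilon_1) \cap \overline{\curlC_y^{-\kappa}}$, where I take $\kappa = +$ (the case $\kappa = -$ being identical). The set $B$ is path-connected: $\overline{\curlC_y^{-\kappa}} = \{(x,\lambda) : \langle l, x\rangle \le -y\|x\|\}$ is a convex cone and hence stable under scaling $x$ by factors in $[0,1]$, so every $(x,\mu) \in B$ is joined inside $B$ to one of the poles $(0, \lambda_0 \pm \epsilon_1)$ by letting $x \mapsto t x$ with $t : 1 \to 0$ and adjusting $\mu$ so as to remain on $\del\curlX_{\lambda_0}(\epsilon_1)$; the two poles are joined inside $B$ through any point with $x \ne 0$. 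Since $B$ is connected, it suffices to show that every connected component $Y$ of $A$ has a limit point in $B$: the set in question, $A \cup B$, is then the union of the connected sets $Y \cup B$, all containing $B$, and so is connected.

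For the localisation, note that $\epsilon_1 < S$, so by \eqref{eqn:local-cone} every point of $\curlS \cap \barcurlX_{\lambda_0}(\epsilon_1)$ other than $(0, \lambda_0)$ lies in the open set $\curlC_y^{+} \cup \curlC_y^{-}$; consequently $C_{\lambda_0}^{\kappa} \cap \barcurlX_{\lambda_0}(\epsilon_1)$ meets $\del\curlC_y^{-\kappa}$ only at $(0, \lambda_0)$ and meets $\del\curlX_{\lambda_0}(\epsilon_1) \cap \overline{\curlC_y^{-\kappa}}$ only inside $\curlC_y^{-\kappa}$. As $C_{\lambda_0}^{\kappa}$ is closed and $\barcurlX_{\lambda_0}(\epsilon_1) \subseteq \Int\Om$, the set $C_{\lambda_0}^{\kappa} \cap \barcurlX_{\lambda_0}(\epsilon_1)$ is a bounded closed set of solutions of \eqref{eqn:nonlinear-eigenvalue-problem} and hence compact; therefore $\overline{A}$ is compact and $\overline{A} \setminus A \subseteq B \cup \{(0, \lambda_0)\}$. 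Since each component $Y$ of $A$ is closed in $A$, this gives $\overline{Y} \setminus Y \subseteq B \cup \{(0, \lambda_0)\}$, so the claim can only fail through a component $Y$ with $\overline{Y} \cap B = \emptyset$; such a $Y$ satisfies $\overline{Y} \subseteq Y \cup \{(0, \lambda_0)\} \subseteq \curlX_{\lambda_0}(\epsilon_1)$, and $\overline{Y}$ is then a compact connected subset of $C_{\lambda_0}^{\kappa}$ meeting the topological boundary of $\curlX_{\lambda_0}(\epsilon_1) \cap \curlC_y^{-\kappa}$, at most, at $(0, \lambda_0)$.

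It is in excluding this case that the hypothesis is used, and this is the main obstacle. Let $p \in C_{\lambda_0, \epsilon_1}^{\kappa} \cap \del\curlX_{\lambda_0}(\epsilon_1) \cap \overline{\curlC_y^{-\kappa}}$ as given; by the previous paragraph $p \in \curlC_y^{-\kappa}$, and $p \in C_{\lambda_0, \epsilon}^{\kappa}$ for every $0 < \epsilon < \epsilon_1$, since these sets increase as $\epsilon \downarrow 0$. For such $\epsilon$ put
\[
    L_\epsilon := C_{\lambda_0, \epsilon}^{\kappa} \cap \{(x, \mu) : \epsilon \le \|x\| + |\lambda_0 - \mu| \le \epsilon_1\} \cap \curlC_y^{-\kappa};
\]
by \eqref{eqn:local-cone}, $L_\epsilon \subseteq A \cup B$ and its boundary relative to $C_{\lambda_0, \epsilon}^{\kappa}$ lies in $\del\curlX_{\lambda_0}(\epsilon) \cup \del\curlX_{\lambda_0}(\epsilon_1)$ (it cannot reach $\del\curlC_y^{-\kappa}$). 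Fixing a point $q$ of a component $Y$ as above that lies in some $C_{\lambda_0, \epsilon}^{\kappa}$ (which one may do, the points of $\bigcup_{\epsilon} C_{\lambda_0, \epsilon}^{\kappa}$ in $A$ being dense in $A$), I would apply a boundary-bumping argument to the continuum $C_{\lambda_0, \epsilon}^{\kappa}$ — using that it is compact when $C_{\lambda_0}^{\kappa}$ is bounded and Proposition~\ref{prop:simple-continuum} otherwise, together with Lemma~\ref{lem:disjoint-set-connection} — to conclude that the component of $q$ in $L_\epsilon$ meets $\del\curlX_{\lambda_0}(\epsilon_1)$ or $\del\curlX_{\lambda_0}(\epsilon)$. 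If it meets $\del\curlX_{\lambda_0}(\epsilon_1)$, then, being a connected subset of $A \cup B$ through $q$, it contains a point of $B$, forcing $\overline{Y} \cap B \ne \emptyset$ — a contradiction. The obstacle is the other alternative, in which the component of $q$ in $L_\epsilon$ meets only the inner sphere $\del\curlX_{\lambda_0}(\epsilon)$ for every small $\epsilon$: the nested union of these components (which grows as $\epsilon \downarrow 0$) is then a connected subset of $A$ accumulating at $(0, \lambda_0)$ and disjoint from $B$, and it is exactly here that one must exploit $p \in C_{\lambda_0, \epsilon_1}^{\kappa}$ rather than merely $p \in B$. The idea is that $p$ and this union lie in distinct components of $L_\epsilon$, yet in the connected set $C_{\lambda_0, \epsilon}^{\kappa}$, which within $\barcurlX_{\lambda_0}(\epsilon_1)$ can link them only by passing through $\curlC_y^{\kappa} \cup \{(0, \lambda_0)\}$ near the origin — because $\curlC_y^{-\kappa} \cap \curlX_{\lambda_0}(\epsilon)$ has been deleted and $\del\curlC_y^{-\kappa}$ is inaccessible by \eqref{eqn:local-cone} — and analysing this linkage, again via Lemma~\ref{lem:disjoint-set-connection}, contradicts the structure obtained above. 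Once every component of $A$ is shown to have a limit point in $B$, the reduction of the first paragraph completes the proof.
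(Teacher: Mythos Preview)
Your reduction --- showing $B$ is connected and that it therefore suffices to prove every component of $A$ accumulates on $B$ --- is sound, and the localisation $\overline{A}\setminus A\subseteq B\cup\{(0,\lambda_0)\}$ is correct. But the argument breaks down at two places.

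First, the choice of $q\in Y\cap C_{\lambda_0,\epsilon}^\kappa$ is not justified by density. Density of $\bigcup_\epsilon C_{\lambda_0,\epsilon}^\kappa$ in $C_{\lambda_0}^\kappa$ only gives you a point $q'\in C_{\lambda_0,\epsilon}^\kappa\cap A$ \emph{near} $Y$; without local connectedness of $C_{\lambda_0}^\kappa$ there is no reason $q'$ lies in the same component $Y$ of $A$. The bad component could consist entirely of limit points of the union that belong to no $C_{\lambda_0,\epsilon}^\kappa$.

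Second, and more seriously, the ``obstacle'' case is not resolved. You correctly identify that when the component of $q$ in $L_\epsilon$ meets only the inner sphere for every small $\epsilon$, the hypothesis $p\in C_{\lambda_0,\epsilon_1}^\kappa$ must enter. But the sentence ``analysing this linkage \ldots\ contradicts the structure obtained above'' is not an argument: you have a connected set $C_{\lambda_0,\epsilon}^\kappa$ containing $p$, $q$ and $(0,\lambda_0)$, and the fact that $p$ and $q$ sit in different components of $L_\epsilon$ is perfectly compatible with $C_{\lambda_0,\epsilon}^\kappa$ linking them through the $\curlC_y^{\kappa}$-side and through the exterior of $\barcurlX_{\lambda_0}(\epsilon_1)$. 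No contradiction has been exhibited.

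The paper bypasses both difficulties with a direct clopen argument on the approximants
\[
Y_\epsilon:=\bigl(C_{\lambda_0,\epsilon}^\kappa\cap\curlX_{\lambda_0}(\epsilon_1)\cap\curlC_y^{-\kappa}\bigr)\cup B.
\]
The key observation --- the one your sketch is circling --- is that, by \eqref{eqn:local-cone} together with the excision built into the definition of $C_{\lambda_0,\epsilon}^\kappa$, the set $C_{\lambda_0,\epsilon}^\kappa\cap\curlX_{\lambda_0}(\epsilon_1)\cap\curlC_y^{-\kappa}$ is \emph{already closed in $\curlX$}: it equals $C_{\lambda_0,\epsilon}^\kappa\cap\barcurlX_{\lambda_0}(\epsilon_1)\cap\overline{\curlC_y^{-\kappa}}\setminus\curlX_{\lambda_0}(\epsilon)$. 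Any clopen $V\subseteq Y_\epsilon$ disjoint from $B$ is therefore clopen in the connected set $C_{\lambda_0,\epsilon}^\kappa$, forcing $V=\emptyset$ (the alternative $V=C_{\lambda_0,\epsilon}^\kappa$ is ruled out by the hypothesis, since $p\in C_{\lambda_0,\epsilon_1}^\kappa\cap B\subseteq C_{\lambda_0,\epsilon}^\kappa\cap B$). This yields connectedness of each $Y_\epsilon$ without compactness, boundary bumping, or Proposition~\ref{prop:simple-continuum}; passing from $Y_\epsilon$ to $Y$ is then a routine sandwich between a connected set and its closure.
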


\begin{proof}
    Fix $0 < \epsilon_1 < S$ and let
    $Y = (C_{\lambda_0}^\kappa \cap \curlX_{\lambda_0}(\epsilon_1) \cap \curlC_y^{-\kappa})
        \cup (\del \curlX_{\lambda_0}(\epsilon_1) \cap \overline{\curlC_y^{-\kappa}})$.
    To prove that $Y$ is connected, we only need to show that
    \[
        Y_\epsilon := \biggl(C_{\lambda_0,\epsilon}^\kappa \cap \curlX_{\lambda_0}(\epsilon_1) \cap \curlC_y^{-\kappa} \biggr)
            \cup \biggl(\del \curlX_{\lambda_0}(\epsilon_1) \cap \overline{\curlC_y^{-\kappa}} \biggr)
    \]
    is connected for all $0 < \epsilon < \epsilon_1$. Then
    $\bigcup_{0 < \epsilon < S} Y_\epsilon$ is connected
    as the union of connected sets sharing a point
    \cite[Chapter~V Theorem~1.5]{dugundjiTopology1978}. Since
    $A \cap \overline{B} \subseteq \overline{A \cap B}$
    for all sets $A$ and $B$ with $A$ open, by taking
    $A = \curlX_{\lambda_0}(\epsilon_1) \cap \curlC_y^{-\kappa}$ and
    $B = \bigcup_{0 < \epsilon < \epsilon_1} C_{\lambda_0,\epsilon}^\kappa$ we see that
    \[
        \bigcup_{0 < \epsilon < \epsilon_1} Y_\epsilon
        \subseteq Y
        \subseteq \overline{\left( \left( \bigcup_{0 < \epsilon < \epsilon_1} C_{\lambda_0,\epsilon}^\kappa \right)
            \cap \curlX_{\lambda_0}(\epsilon_1) \cap \curlC_y^{-\kappa} \right)}
                \cup \biggl(\del \curlX_{\lambda_0}(\epsilon_1) \cap \overline{\curlC_y^{-\kappa}} \biggr)
        = \overline{\bigcup_{0 < \epsilon < \epsilon_1} Y_\epsilon}
    \]
    and so $Y$ is connected as a set contained between a connected set and its
    closure \cite[Chapter~V Theorem~1.6]{dugundjiTopology1978}.

    Now we show that $Y_\epsilon$ is connected.
    Let $V$ be a closed and open subset of $Y_\epsilon$ for some
    $0 < \epsilon < \epsilon_1$ fixed. Since
    $\del \curlX_{\lambda_0}(\epsilon_1) \cap \overline{\curlC_y^{-\kappa}}$ is connected,
    $V$ is either disjoint from it or contains it. Swapping $V$ with its
    complement in $Y_\epsilon$ if needed, we may assume that the former case
    is true. Thus we have that $V$ is a subset of $C_{\lambda_0,\epsilon}^\kappa$.
    We see from
    $\curlS \setminus \{(0, \lambda_0)\} \cap \barcurlX_{\lambda_0}(\epsilon_1) \subseteq \curlC_y$
    and the definition of $C_{\lambda_0,\epsilon}^\kappa$ that
    \[
        C_{\lambda_0,\epsilon}^\kappa \cap \barcurlX_{\lambda_0}(\epsilon_1) \cap \curlC_y^{-\kappa}
        = C_{\lambda_0,\epsilon}^\kappa \cap \barcurlX_{\lambda_0}(\epsilon_1)
            \cap \overline{\curlC_y^{-\kappa}} \setminus \curlX_{\lambda_0}(\epsilon)
    \]
    and so $V$ is a closed subset of a closed set in $\curlX$. Also, $V$ is open
    in $C_{\lambda_0,\epsilon}^\kappa$ since $V$ is open in
    $C_{\lambda_0,\epsilon}^\kappa \cap \curlX_{\lambda_0}(\epsilon_1) \cap \curlC_y^{-\kappa}$, an open subset of
    $C_{\lambda_0,\epsilon}^\kappa$. From the connectedness of $C_{\lambda_0,\epsilon}^\kappa$, we obtain that
    either $V = \emptyset$ or $V = C_{\lambda_0,\epsilon}^\kappa$.
    Since $V$ is disjoint from
    $\del \curlX_{\lambda_0}(\epsilon_1) \cap \overline{\curlC_y^{-\kappa}}$,
    from
    $C_{\lambda_0,\epsilon_1}^\kappa \cap \del \curlX_{\lambda_0}(\epsilon_1)
    \cap \overline{\curlC_y^{-\kappa}} \neq \emptyset$ we get that
    $V = \emptyset$ and so $Y_\epsilon$ is connected.
\end{proof}

Now we can prove Proposition~\ref{prop:simple-bifurcation-special},
and so conclude that Theorem~\ref{thm:simple-bifurcation-branches} holds.

\begin{proof}[Proof of Proposition~\ref{prop:simple-bifurcation-special}]
    We justified earlier that this proposition reduces to the case $\Om = \curlX$,
    and so we assume $\Om = \curlX$. Suppose that $C_{\lambda_0}^\kappa$ is bounded.
    Suppose for a contradiction that $T_\epsilon^\kappa$ is disjoint from
    $\del \curlX_{\lambda_0}(\epsilon)$ for some $\epsilon \in (0, S)$. By
    Lemma~\ref{lem:disjoint-set-connection}
    we can express
    $C_{\lambda_0}^\kappa \cap \barcurlX_{\lambda_0}(\epsilon) \cap \overline{\curlC_y^{-\kappa}}$
    as the disjoint union of compact sets $M_1$ and $M_2$ with
    $T_\epsilon^\kappa \subseteq M_1$ and
    $C_{\lambda_0}^\kappa \cap \del \curlX_{\lambda_0}(\epsilon) \cap \overline{\curlC_y^{-\kappa}} \subseteq M_2$.
    We see that $M_1 \subseteq \curlX_{\lambda_0}(\epsilon)$ and
    so, by compactness of $M_1$, there exists
    an open neighbourhood of $M_1$ contained in
    $\curlX_{\lambda_0}(\epsilon')$ for some $\epsilon' \in (0, \epsilon)$,
    with boundary disjoint from
    $C_{\lambda_0}^\kappa \cap \curlX_{\lambda_0}(\epsilon) \cap \overline{\curlC_y^{-\kappa}}$.
    To obtain a contradiction, we show that for every
    $0 < \epsilon_0 < \epsilon_1 < S$,
    the boundary of every open neighbourhood of $(0, \lambda_0)$
    contained in $\curlX_{\lambda_0}(\epsilon_0)$ intersects
    $C_{\lambda_0}^\kappa \cap \curlX_{\lambda_0}(\epsilon_1) \cap \overline{\curlC_y^{-\kappa}}$
    non-trivially.

    Fix $0 < \epsilon_0 < \epsilon_1 < S$ and let
    $U \subseteq \curlX_{\lambda_0}(\epsilon_0)$ be an open neighbourhood of
    $(0, \lambda_0)$. We know that $C_{\lambda_0, \epsilon}^\kappa \subseteq C_{\lambda_0}^\kappa$
    for all $0 < \epsilon < S$, and so Proposition~\ref{prop:simple-continuum}
    yields that
    \[
        \emptyset
        \neq C_{\lambda_0,\epsilon}^\kappa \cap \del \curlX_{\lambda_0}(\epsilon) \cap \curlC_y^{-\kappa}
        \subseteq C_{\lambda_0}^\kappa \cap \del \curlX_{\lambda_0}(\epsilon) \cap \curlC_y^{-\kappa}.
    \]
    Using this and
    $C_{\lambda_0}^\kappa \cap \del \curlX_{\lambda_0}(\epsilon_1) \subseteq \curlX \setminus U$,
    we see that
    \[
        Y := \biggl(C_{\lambda_0}^\kappa \cap \curlX_{\lambda_0}(\epsilon_1) \cap \curlC_y^{-\kappa} \biggr)
            \cup \biggl(\del \curlX_{\lambda_0}(\epsilon_1) \cap \overline{\curlC_y^{-\kappa}} \biggr)
    \]
    intersects $\curlX \setminus U$ non-trivially and
    $(0, \lambda_0) \in \overline{Y} \cap U$.
    From Lemma~\ref{lem:connected-sets} we get that $Y$, and so $\overline{Y}$,
    is connected. Thus $\del U \cap \overline{Y} \neq \emptyset$ and so, since
    $U \subseteq \curlX_{\lambda_0}(\epsilon_0)$, we conclude that $\del U$ intersects
    $C_{\lambda_0}^\kappa \cap \curlX_{\lambda_0}(\epsilon_1) \cap \overline{\curlC_y^{-\kappa}}$
    non-trivially, as required.
\end{proof}

\section*{Acknowledgement}

I discovered the content of this paper while writing my Honours thesis.
I would like to thank Daniel Daners for his supervision and for encouraging me
to write this paper.

\printbibliography

\end{document}